\documentclass[11pt,reqno]{amsart}
\usepackage{color}

\setlength{\textwidth}{\paperwidth}
\addtolength{\textwidth}{-2in}
\calclayout

\usepackage{amsmath,mathtools}
\usepackage{amsfonts,amscd}
\usepackage{amssymb}
\usepackage{url}
\usepackage{hyperref}

\usepackage[english]{babel}
\usepackage{booktabs}
\usepackage{tikz}

\theoremstyle{plain}
\newtheorem{theorem}                 {Theorem}      [section]

\newtheorem{corollary}    [theorem]  {Corollary}
\newtheorem{lemma}        [theorem]  {Lemma}
\newtheorem{proposition}  [theorem]  {Proposition}

\theoremstyle{definition}

\newtheorem{definition}   [theorem]  {Definition}

\newtheorem{example}      [theorem]  {Example}

\newtheorem{remark}       [theorem]  {Remark}

\usepackage{tikz}
\usetikzlibrary{matrix}
\usetikzlibrary{arrows.meta}
\usetikzlibrary{cd}


\numberwithin{equation}{section}

\newcommand{\defeq}{\vcentcolon=}

\def \cn{{\mathbb C}}
\def \C{{\mathbb C}}

\def \rn{{\mathbb R}}
\def \R{{\mathbb R}}

\def \zn{{\mathbb Z}}

\def \E{\mathcal E}
\def \F{\mathcal F}

\def\nab#1#2{\hbox{$\nabla$\kern -.3em\lower 1.0 ex
		\hbox{$#1$}\kern -.1 em {$#2$}}}
\def\hatnab#1#2{\hbox{$\nabla$\kern -.3em\lower 1.0 ex
		\hbox{$#1$}\kern -.1 em {$#2$}}}

\def\span{\mathrm{span}}

\def \SL2{\widetilde{\text{\bf SL}}_{2}(\rn)}

\def \SO#1{\mathbf{SO}(#1)}

\def \U#1{\text{\bf U}(#1)}

\def \SU#1{\text{\bf SU}(#1)}

\def \Sp#1{\text{\bf Sp}(#1)}

\DeclareMathOperator{\Div}{div} 

\DeclareMathOperator{\trace}{trace}

\usepackage{enumitem}
\setlist[enumerate,1]{label={(\roman*)}}

\numberwithin{equation}{section}
\allowdisplaybreaks

\begin{document}

\subjclass[2020]{53C35, 53C43, 58E20}
	
\keywords{Eigenfunctions, Eigenfamilies, Harmonic morphisms.}

\thanks{OR gratefully acknowledges the support of the Deutsche Forschungsgemeinschaft (DFG, German Research Foundation) - Project-ID 427320536 - SFB 1442, as well as of Germany's Excellence Strategy EXC 2044 390685587, Mathematics M\"unster: Dynamics-Geometry-Structure. TM gratefully acknowledges the hospitality of Mathematics M\"unster during his visit.}

\author{Thomas Jack Munn}
\address{Mathematics, Faculty of Science\\
	University of Lund\\
	Box 118, Lund 221 00\\
	Sweden}
\email{Thomas.Munn@math.lu.se}

\author{Oskar Riedler}
\address{Universit\"at M\"unster, Mathematisches Institut, Einsteinstr. 62, 48149 M\"unster, Germany}
\email{oskar.riedler@uni-muenster.de}

\title
[]
{$(\lambda,\lambda)$-Eigenfunctions on Compact Manifolds}

\begin{abstract}
In this note we study $(\lambda,\mu)$-eigenfamilies on compact Riemannian manifolds when $\lambda = \mu$. We show that any compact manifold admitting a $(\lambda,\lambda)$-eigenfunction is a mapping torus and that any $(\lambda,\lambda)$-eigenfamily is one dimensional. Additionally, we consider generalised eigenfamilies, which can have higher dimension, and relate these to harmonic Riemannian submersions to a torus.\end{abstract}

\maketitle

\section{Introduction}
\label{section-introduction}

Let $(M,g)$ be a Riemannian manifold, $\F = \{ \phi_i: M \to \cn \mid i \in I \}$ a family of complex-valued functions, and $\lambda,\mu\in\cn$. We say that $\F$ is a \emph{$(\lambda,\mu)$-eigenfamily}, if for every $i,j \in I$ we have that
$$ \tau(\phi_i) = \lambda \cdot \phi_i, \qquad g(\nabla \phi_i, \nabla \phi_j) = \mu \cdot \phi_i \phi_j,$$
where $\tau$ is the Laplace-Beltrami operator and $g$ denotes the complex bilinear extension of the metric to the complexified tangent bundle. Elements of $(\lambda,\mu)$-eigenfamilies are called \emph{$(\lambda,\mu)$-eigenfunctions}. 
\smallskip

Eigenfamilies were introduced by Gudmundsson and Sakovich \cite{Gud-Sak-1} in order to produce harmonic morphisms. Since then they have been utilised to generate other geometrically interesting data, such as $r$-harmonic functions \cite{Gud-Sob-1},  and minimal submanifolds of codimension two in \cite{Gud-Mun-1} by Gudmundsson and the first named author.

Eigenfamilies have been shown to exist on all the classical Riemannian symmetric spaces, see \cite{Gud-Sak-1, Gud-Sak-2, Gud-Sif-Sob-2, Gha-Gud-4, Gha-Gud-5}. 
The well studied case of  $(\lambda,\mu)$-eigenfunctions when $(\lambda,\mu)=(0,0)$ is that of \emph{harmonic morphisms with codomain $\cn$}. For the general theory of harmonic morphisms between Riemannian manifolds we refer to the excellent book \cite{Bai-Woo-book} and the regularly updated online bibliography \cite{Gud-bib}.
\smallskip

Recently, there has been interest in classifying eigenfamilies. $(\lambda,\mu)$-eigenfamilies on $m$-spheres are shown to correspond to $(0,0)$-eigenfamilies of homogeneous polynomials on the ambient space $\rn^{m+1}$ in \cite{Rie-1} by the second named author. When $M$ is \emph{compact} many results describing the spectra, existence, and topological properties of eigenfunctions are derived in \cite{Rie-Sif-1} by Siffert and the second named author. In particular, they show that eigenfamilies on compact manifolds have $\lambda\leq\mu<0$ and that the case $\lambda=\mu$ is rather singular: the image degenerates to a circle and this case must be excluded from certain constructions. 

\smallbreak
We begin this note by providing a complete characterisation of $(\lambda,\lambda)$-eigenfamilies on compact manifolds. The situation for a single eigenfunction is as follows:
\begin{theorem} \label{thm-ll-eigenfunction}
Let $(M,g)$ be a compact and connected Riemannian manifold, $\phi:M\to\cn$ a non-constant smooth map, $\lambda<0$. The following are equivalent:
\begin{enumerate}
\item $\phi$ is a $(\lambda,\lambda)$-eigenfunction.
\item For any $x_0\in M$ the map $\pi: (M,g)\to (S^1, \frac1{|\lambda|} dt^2)$, $x\mapsto \frac{\phi(x)}{|\phi(x_0)|}$ is a well-defined harmonic Riemannian submersion.
\end{enumerate}
\end{theorem}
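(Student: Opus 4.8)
The plan is to prove the two implications separately; the heart of the matter, and the only place where compactness enters, is that a $(\lambda,\lambda)$-eigenfunction necessarily has constant modulus. \emph{From (i) to (ii).} Put $h = |\phi|^2 = \phi\bar\phi$. Applying the product rule $\tau(uv) = u\,\tau v + v\,\tau u + 2\,g(\nabla u,\nabla v)$ together with $\tau\phi = \lambda\phi$, $\tau\bar\phi = \lambda\bar\phi$ (valid since $\lambda\in\rn$) and $g(\nabla\phi,\nabla\phi) = \lambda\phi^2$, $g(\nabla\bar\phi,\nabla\bar\phi) = \lambda\bar\phi^2$, one obtains
\[
	\tau h = 2\lambda h + 2\,g(\nabla\phi,\nabla\bar\phi),
	\qquad
	g(\nabla h,\nabla h) = 2\lambda h^2 + 2h\,g(\nabla\phi,\nabla\bar\phi),
\]
and eliminating the cross term gives the key identity $g(\nabla h,\nabla h) = h\,\tau h$. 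Since $h\ge 0$, on the open set $\{h>0\}$ we have $\tau h = |\nabla h|^2/h\ge 0$, while at a point where $h$ attains its global minimum — an interior point, as $M$ is closed — the Hessian of $h$ is positive semidefinite, so $\tau h\ge 0$ there as well; hence $\tau h\ge 0$ on all of $M$. Compactness and the divergence theorem force $\int_M\tau h\,dV_g = 0$, so $\tau h\equiv 0$, whence $\nabla h\equiv 0$ and $h$ is constant, equal to $c^2 := |\phi(x_0)|^2$, with $c>0$ since $\phi$ is non-constant. Thus $\pi = \phi/c$ maps $M$ smoothly into $S^1$ and is well defined.

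To analyse $\pi$ I would pass to a local logarithm: near any point write $\phi = c\,e^{i\theta}$ for a real smooth function $\theta$, determined up to a locally constant integer multiple of $2\pi$, so that $\nabla\theta$ and $\tau\theta$ are globally defined. Then $\nabla\phi = i\phi\,\nabla\theta$ and $\tau\phi = i\phi\,\tau\theta - \phi\,|\nabla\theta|^2$, so $g(\nabla\phi,\nabla\phi) = -\phi^2|\nabla\theta|^2$, and the eigenfunction equations, after separating real and imaginary parts, become exactly $|\nabla\theta|^2 = -\lambda = |\lambda|$ and $\tau\theta = 0$. Since $|\lambda|>0$, $\nabla\theta$ is nowhere zero and $\pi$ is a submersion. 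Parametrising the target circle by its standard angle $t$, the metric $\frac1{|\lambda|}dt^2$ has unit-speed coordinate $s = t/\sqrt{|\lambda|}$, and $\pi$ reads $s\circ\pi = \theta/\sqrt{|\lambda|}$ in it; hence the horizontal dilation is $|\nabla(s\circ\pi)|_g = |\nabla\theta|_g/\sqrt{|\lambda|} = 1$, which is precisely the statement that $\pi$ is a Riemannian submersion, and the tension field of a map into a flat one-dimensional target is $\tau(\pi) = \tau_g(s\circ\pi)\,\partial_s = (\tau_g\theta/\sqrt{|\lambda|})\,\partial_s = 0$, so $\pi$ is harmonic. This gives (ii).

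\emph{From (ii) to (i).} This is the same chain of equalities run backwards. Well-definedness of $\pi$ into $S^1$ forces $|\phi|\equiv|\phi(x_0)| =: c$, with $c>0$ as $\phi$ is non-constant; write $\phi = c\,e^{i\theta}$ locally as above. That $\pi$ is a Riemannian submersion gives $|\nabla(s\circ\pi)|_g = 1$, i.e.\ $|\nabla\theta|^2 = -\lambda$, and harmonicity of $\pi$ gives $\tau_g\theta = 0$; substituting these into $\tau\phi = i\phi\,\tau\theta - \phi|\nabla\theta|^2$ and $g(\nabla\phi,\nabla\phi) = -\phi^2|\nabla\theta|^2$ yields $\tau\phi = \lambda\phi$ and $g(\nabla\phi,\nabla\phi) = \lambda\phi^2$, so $\{\phi\}$ is a $(\lambda,\lambda)$-eigenfamily and $\phi$ a $(\lambda,\lambda)$-eigenfunction. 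The one genuine obstacle is the constant-modulus step: without compactness the inequality $\tau h\ge 0$ is vacuous, so the divergence-theorem argument — equivalently, a maximum-principle argument for $h = |\phi|^2$ — is the essential ingredient; everything else is bookkeeping with the polar form of $\phi$.
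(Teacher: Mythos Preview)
Your proof is correct and follows essentially the same route as the paper: establish that $|\phi|$ is constant, pass to the local polar form $\phi=c\,e^{i\theta}$, and read off $|\nabla\theta|^2=|\lambda|$ and $\tau\theta=0$ as the Riemannian-submersion and harmonicity conditions. The only difference is packaging: the paper treats this theorem as the $k=1$ case of its Theorem~\ref{torus} and imports the constant-modulus fact and the polar identities from \cite{Rie-Sif-1} (Proposition~\ref{Rie-Sif-lambdalambda} and Lemma~\ref{Rie-Sif-ef-polar}), whereas you supply a self-contained derivation of $|\phi|^2$ constant via the identity $g(\nabla h,\nabla h)=h\,\tau h$ and the divergence theorem.
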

\smallbreak

Harmonic Riemannian submersions to $S^1$ are well understood. The only Riemannian manifolds admitting Riemannian submersions to a circle are mapping tori, i.e. manifolds which may be written as
$$(M,g)\cong \left(\frac{M_0 \times [0,2\pi]}{(x,0)\sim (\eta(x),2\pi)},\ g(t)+\frac1{|\lambda|}dt^2\right)$$
where $M_0$ is compact, $g(t)$ a Riemannian metric on $M_0$ for all $t$, and $\eta:M_0\to M_0$ is a diffeomorphism with $\eta^*g(2\pi)= g(0)$. The Riemannian submersion $[x,t]\mapsto e^{it}$ is then harmonic if and only if the fibres are minimal, i.e. if and only if the volume density of $g(t)$ on $M_0$ is constant in $t$.

The above remarks can be considered to be a version of Tischler's Theorem \cite{Tis} in the Riemannian setting. We formulate and prove this as Theorem \ref{thm-circle-bundle} in Section \ref{sec: 3} below. 

Beyond this we show that any $(\lambda,\lambda)$-eigenfamily on a compact and connected $(M,g)$ must necessarily be spanned by a single function, whence the previous theorem already gives the complete picture for $(\lambda,\lambda)$-eigenfamilies:

\begin{proposition} \label{Theorem-Span-LambdaLambda}
Let $(M,g)$ be a compact and connected Riemannian manifold, $\lambda\in\cn$ and $\mathcal F$ a $(\lambda,\lambda)$-eigenfamily on $M$. Then $\dim(\span_\cn(\mathcal F)) \leq 1$.
\end{proposition}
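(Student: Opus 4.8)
The plan is to suppose $\mathcal F$ is a $(\lambda,\lambda)$-eigenfamily on the compact connected $(M,g)$ and show that any two elements $\phi,\psi\in\mathcal F$ are proportional over $\cn$. We may assume $\phi$ and $\psi$ are non-constant (a constant eigenfunction forces $\lambda\phi=\tau(\phi)=0$ and $\lambda\phi^2=g(\nabla\phi,\nabla\phi)=0$, so it vanishes, and the zero function is proportional to anything). By Theorem \ref{thm-ll-eigenfunction}, after rescaling each of $\phi$ and $\psi$ by a positive constant we obtain harmonic Riemannian submersions $\pi_\phi,\pi_\psi\colon (M,g)\to(S^1,\tfrac1{|\lambda|}dt^2)$. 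The key geometric input is that a Riemannian submersion onto a one-dimensional target has, at every point, a one-dimensional horizontal space, namely the line spanned by $\nabla|\phi|$ (equivalently, by the real and imaginary parts of $\nabla\phi$, which must be parallel as vectors). So the idea is to compare the two horizontal distributions and show they coincide, then integrate.

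The concrete route I would take is to work directly with the eigenfamily identities for the pair $\{\phi,\psi\}$. Writing $u=\log|\phi|$ and $v=\arg\phi$ locally (away from the zero set of $\phi$, which by Theorem \ref{thm-ll-eigenfunction} is empty since $\pi_\phi$ is a submersion, so $\phi$ is nowhere zero), the relation $g(\nabla\phi,\nabla\phi)=\lambda\phi^2$ expands to $|\nabla u|^2-|\nabla v|^2 + 2i\,g(\nabla u,\nabla v) = \lambda$, forcing $g(\nabla u,\nabla v)=0$ and $|\nabla u|^2 = |\nabla v|^2+\lambda$; since $\lambda<0$ and the left side is $\ge 0$ we in fact need $|\nabla v|^2\ge|\lambda|>0$, and one checks (this is essentially the content of Theorem \ref{thm-ll-eigenfunction}) that $u$ is forced to be constant, so $\phi = c\, e^{iv}$ with $|\nabla v|^2=|\lambda|$ constant and $v$ harmonic. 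The same holds for $\psi = c'e^{iw}$ with $|\nabla w|^2 = |\lambda|$, $w$ harmonic. Now apply the cross identity $g(\nabla\phi,\nabla\psi)=\lambda\phi\psi$: substituting gives $cc'e^{i(v+w)}\big(-g(\nabla v,\nabla w)+i\,(\text{terms that vanish since }u,u'\text{ const})\big)=\lambda cc' e^{i(v+w)}$, hence $g(\nabla v,\nabla w) = |\lambda| = |\nabla v|\,|\nabla w|$. By the equality case of Cauchy--Schwarz, $\nabla v$ and $\nabla w$ are positively proportional with proportionality factor $1$, i.e. $\nabla v = \nabla w$ pointwise. Therefore $v-w$ is locally constant, hence constant on connected $M$, so $\psi = c'e^{iw} = c' e^{-i(v-w)_0} e^{iv}$ is a constant multiple of $\phi$. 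This shows $\span_\cn(\mathcal F)$ is spanned by the single function $\phi$, giving $\dim\le 1$.

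I expect the main obstacle to be the step that pins down $u$ (and $u'$) as constant and justifies working globally with the multivalued functions $v,w$: one must argue that although $v$ is only locally defined (it is a lift of $\pi_\phi$), its gradient $\nabla v$ is a well-defined global vector field, and similarly for $\nabla w$, so the Cauchy--Schwarz equality argument and the conclusion $\nabla v=\nabla w$ make sense on all of $M$; then $v-w$ has zero gradient globally and is genuinely a global constant even though $v$ and $w$ individually are not. This is where Theorem \ref{thm-ll-eigenfunction} does the heavy lifting, since it already packages the statement that a $(\lambda,\lambda)$-eigenfunction is, up to scale, $e^{iv}$ for a globally-defined harmonic Riemannian submersion to the circle; once both $\phi$ and $\psi$ are in this normal form, the comparison is the short Cauchy--Schwarz argument above. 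A cosmetic subtlety to handle carefully is the normalisation: Theorem \ref{thm-ll-eigenfunction} rescales $\phi$ by $1/|\phi(x_0)|$, so one should first reduce to the case $|\phi|\equiv|\psi|\equiv 1$, observe this does not change the span, and then conclude $\phi=e^{i\theta_0}\psi$ for some constant $\theta_0\in\rn$.
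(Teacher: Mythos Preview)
Your argument is correct (modulo a tiny wobble in the $\lambda=0$ edge case: there a constant $\phi$ need not vanish, but then every $\psi\in\mathcal F$ is harmonic on compact $M$, hence constant, and the span is still $1$-dimensional). The Cauchy--Schwarz step is clean: once $\phi=c\,e^{iv}$, $\psi=c'\,e^{iw}$ with $|\nabla v|^2=|\nabla w|^2=|\lambda|$, the cross identity gives $g(\nabla v,\nabla w)=|\lambda|$, forcing $\nabla v=\nabla w$ globally and hence $v-w$ constant.

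The paper's proof is shorter and goes by a different route. Rather than unpacking the polar form and comparing gradients, it invokes the Gudmundsson--Sakovich construction (Theorem~\ref{THM-Gud-Sak-HM-From-EF}): for $\phi,\psi$ in a $(\lambda,\lambda)$-eigenfamily the quotient $\phi/\psi$ is a harmonic morphism, in particular a harmonic $\cn$-valued function, on $M\setminus\psi^{-1}(0)$. Proposition~\ref{Rie-Sif-lambdalambda} gives $\psi$ nowhere zero, so $\phi/\psi$ is globally defined and harmonic on compact $M$, hence constant. What the paper's approach buys is brevity and a direct link to the harmonic-morphism machinery already set up; what your approach buys is that it avoids Theorem~\ref{THM-Gud-Sak-HM-From-EF} entirely and instead rides on the structural description of a single $(\lambda,\lambda)$-eigenfunction (Proposition~\ref{Rie-Sif-lambdalambda} and Lemma~\ref{Rie-Sif-ef-polar}, which together give your polar normal form without needing the full Theorem~\ref{thm-ll-eigenfunction}). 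Both routes ultimately rest on Proposition~\ref{Rie-Sif-lambdalambda} for non-vanishing.
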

	
In certain situations a generalised notion of a $(\lambda,\mu)$-eigenfamily appears, where one allows different values of $\lambda$ for each function, and the number $\mu$ is replaced by a matrix. To be precise:
\begin{definition}
Let $\mathcal F=\{\phi_1,...,\phi_k\}$ be a finite family of functions $M\to\cn$ and $\lambda_i, A_{ij}\in\cn$ for $i,j\in\{1,...,k\}$. We say that $\mathcal F$ is $(\lambda_i, A_{ij})$-eigenfamily if
$$\tau(\phi_i) = \lambda_i\, \phi_i,\qquad g(\nabla \phi_i,\nabla\phi_j) = A_{ij}\, \phi_i \phi_j$$
for all $i,j\in\{1,...,k\}$.
\end{definition}
Examples of these families appear on Sasaki manifolds, where one can build an embedding 
$$(\phi_1,...,\phi_k):(M,g)\to \cn^k$$
for which $\{\phi_1,...,\phi_k\}$ is a $(\lambda_i, -\sqrt{\mu_i \mu_j})$-eigenfamily for appropriate values $\lambda_i$, $\mu_i$. The construction of these families is carried out in Theorem 3 of \cite{Rie-Sif-1}.
\begin{example}
Let $\mathbf{w}=(w_1,...,w_n)\in\rn_{>0}^n$, we denote the associated weighted Sasakian sphere by $S^{2n-1}_{\mathbf w}$. As a Riemannian manifold this is the unit sphere $S^{2n-1}\subset \cn^n$ equipped with the metric
$$g_{\mathbf w}(v,v) = \frac1{\eta(\xi_{\mathbf w})}\left( \|v\|^2-2\langle\xi_{\mathbf w}, v\rangle \eta_{\mathbf w}(\xi_{\mathbf w})\eta_{\mathbf w}(v)\right)+(1+\frac{\|\xi_{\mathbf w}\|^2}{\eta(\xi_{\mathbf w})} )\eta_{\mathbf w}(v)^2$$
where $v\in TS^{2n-1}$ and $\xi_{\mathbf w}=\sum_i w_i (x_i\partial_{y_i}-y_i\partial_{x_i})$, $\eta=\sum_i (x_i d y_i-y_idx_i)$, $\eta_{\mathbf w}= \tfrac{1}{\eta(\xi_{\mathbf w})} \eta$ for $(x_1+iy_1,...,x_n+iy_n)$ the standard complex coordinates on $\cn^n$. Here $\langle,\rangle$, $\|\cdot\|$ denote the usual Euclidean scalar product and norm. Then the family $\{\phi_1=(x_1+iy_1),...,\phi_n=(x_n+iy_n)\}$ is a $(\lambda_i, A_{ij})$-eigenfamily on $S^{2n-1}_{\mathbf w}$ for
$$\lambda_i = - w_i^2 - w_i(2n-2) ,\qquad A_{ij}= - w_i w_j.$$\end{example}

These kinds of families can still be used to construct local harmonic morphisms:
\begin{theorem} \label{Thm-Generalised-EF-Holo}
Let $(M,g)$ be a Riemannian manifold, $\lambda_i\in \cn, A_{ij}\in \cn$ for $i,j \in \{1,\dots,k \}$, and $\{\phi_1,...,\phi_k\}$ a $(\lambda_i, A_{ij})$-eigenfamily on $(M,g)$. For $U\subseteq \cn^k$ open and $F:U\to\cn$ holomorphic with:
$$F(t_1z_1,...,t_k z_k) = (\prod_it_i^{d_i})F(z_1,...,z_k)$$
for all $(z_1,...,z_k)\in U$ and $t_i\in\rn$ close enough to $1$. Then
$$(\phi_1,...,\phi_k)^{-1}(U)\to\cn, \qquad x\mapsto F(\phi_1(x),...,\phi_k(x))$$
is a $(\widetilde\lambda,\widetilde\mu)$-eigenfunction for $\widetilde\lambda = \sum_i d_i(\lambda_i - A_{ii}) + \sum_{ij} d_i d_j A_{ij}$, $\widetilde\mu= \sum_{ij} d_i d_j A_{ij}$. In particular if
$$\sum_{ij} d_i d_j A_{ij}=0, \qquad \sum_i d_i (\lambda_i - A_{ii})=0,$$
then the map is a harmonic morphism.

\end{theorem}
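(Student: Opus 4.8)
The plan is to compute $\tau\bigl(F(\phi_1,\dots,\phi_k)\bigr)$ and $g(\nabla F(\phi),\nabla F(\phi))$ directly using the chain rule, exploiting the homogeneity of $F$ to eliminate the first-order terms. Write $\psi = F(\phi_1,\dots,\phi_k)$. Differentiating the homogeneity relation $F(t_1 z_1,\dots,t_k z_k) = \bigl(\prod_i t_i^{d_i}\bigr) F(z)$ with respect to $t_i$ and setting all $t_j = 1$ yields the Euler-type identity $z_i \,\partial_{z_i} F(z) = d_i\, F(z)$ for each $i$. This is the key algebraic input: it lets us rewrite $\partial_{z_i}F(\phi) = d_i\, \psi / \phi_i$ wherever it appears (on the open set where the $\phi_i$ are nonvanishing; the general case follows by continuity, or one works with $\phi_i \partial_{z_i} F$ throughout to avoid division).

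Next I would apply the chain rule. For the gradient, $\nabla \psi = \sum_i (\partial_{z_i} F)(\phi)\, \nabla \phi_i$, so
$$ g(\nabla\psi,\nabla\psi) = \sum_{i,j} (\partial_{z_i}F)(\phi)(\partial_{z_j}F)(\phi)\, g(\nabla\phi_i,\nabla\phi_j) = \sum_{i,j} \frac{d_i\psi}{\phi_i}\cdot\frac{d_j\psi}{\phi_j}\cdot A_{ij}\,\phi_i\phi_j = \Bigl(\sum_{i,j} d_i d_j A_{ij}\Bigr)\psi^2, $$
which gives $\widetilde\mu = \sum_{ij} d_i d_j A_{ij}$. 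For the tension field, the chain rule for the Laplace–Beltrami operator on a composition $\psi = F\circ(\phi_1,\dots,\phi_k)$ reads
$$ \tau(\psi) = \sum_i (\partial_{z_i}F)(\phi)\,\tau(\phi_i) + \sum_{i,j} (\partial_{z_i}\partial_{z_j}F)(\phi)\, g(\nabla\phi_i,\nabla\phi_j), $$
since $F$ is holomorphic (so there are no $\bar z$-derivatives, and $g$ is the complex-bilinear extension). The first sum is $\sum_i \frac{d_i\psi}{\phi_i}\cdot\lambda_i\phi_i = \bigl(\sum_i d_i\lambda_i\bigr)\psi$. For the second sum, differentiating the Euler identity $z_i\partial_{z_i}F = d_i F$ in $z_j$ gives $\delta_{ij}\partial_{z_i}F + z_i\partial_{z_i}\partial_{z_j}F = d_i \partial_{z_j}F$, hence $z_i\partial_{z_i}\partial_{z_j}F = (d_i-\delta_{ij})\partial_{z_j}F = (d_i-\delta_{ij})\, d_j\psi/\phi_j$ after evaluating at $\phi$; multiplying by $z_j$ once more yields $\phi_i\phi_j\,(\partial_{z_i}\partial_{z_j}F)(\phi) = (d_i - \delta_{ij})d_j\,\psi$. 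Plugging into the second sum gives $\sum_{i,j}(d_i-\delta_{ij})d_j A_{ij}\,\psi = \bigl(\sum_{ij}d_id_jA_{ij} - \sum_i d_i A_{ii}\bigr)\psi$. Adding the two contributions, $\tau(\psi) = \bigl(\sum_i d_i\lambda_i - \sum_i d_i A_{ii} + \sum_{ij}d_id_jA_{ij}\bigr)\psi = \widetilde\lambda\,\psi$, matching the claimed value $\widetilde\lambda = \sum_i d_i(\lambda_i - A_{ii}) + \sum_{ij}d_id_jA_{ij}$. The final sentence is then immediate: if $\widetilde\mu = 0$ and $\widetilde\lambda = 0$ then $\psi$ is a $(0,0)$-eigenfunction, which is exactly a harmonic morphism to $\cn$.

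The only genuinely delicate point is the handling of the zero set $Z = \{x : \phi_i(x) = 0 \text{ for some } i\}$, where the substitution $\partial_{z_i}F = d_i\psi/\phi_i$ is not literally valid. I expect this to be the main obstacle, though a mild one. The clean way around it is to never divide: one verifies the two identities $\phi_i\phi_j(\partial_{z_i}\partial_{z_j}F)(\phi) = (d_i-\delta_{ij})d_j\psi$ and $\phi_i(\partial_{z_i}F)(\phi) = d_i\psi$ as identities of smooth functions on all of $(\phi_1,\dots,\phi_k)^{-1}(U)$ — they hold on the dense open complement of $Z$ by the computation above and hence everywhere by continuity — and then the chain-rule expressions for $\tau(\psi)$ and $g(\nabla\psi,\nabla\psi)$, after multiplying through appropriately, are genuine identities everywhere. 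One should also note at the outset that the homogeneity hypothesis is only assumed for $t_i$ near $1$, which is all that is needed to differentiate at $t=1$ and obtain the Euler identities; no global homogeneity is required.
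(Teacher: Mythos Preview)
Your proof is correct and follows essentially the same route as the paper: derive the Euler identities $z_i\,\partial_{z_i}F = d_iF$ from the homogeneity hypothesis, apply the chain rule for $\kappa$ and $\tau$, and differentiate the Euler identity once more to handle the second-order term. The paper's write-up is terser and sidesteps your zero-set discussion from the outset by never dividing---it keeps the expressions in the form $(z_i\,\partial_{z_i}F)^\wedge$ and $(z_iz_j\,\partial_{z_i}\partial_{z_j}F)^\wedge$ throughout---which is exactly the ``clean way around'' you describe in your final paragraph.
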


\begin{example}
Let $F:\cn^k\to\cn, (z_1,...,z_k)\mapsto z_1^{d_1}\cdots z_k^{d_k}$ be a monomial map. Then $F$ satisfies the conditions of Theorem \ref{Thm-Generalised-EF-Holo}, and so $F\circ (\phi_1,...,\phi_k)$ gives a new globally defined eigenfunction for any $(\lambda_i, A_{ij})$-eigenfamily on a manifold $M$.\end{example}

In Section \ref{sec: 4} we prove Theorem \ref{torus}, which states that a generalised eigenfamily $\mathcal{F}$ consisting of $(-A_{ii},-A_{ij})$-eigenfunctions on $M$ is equivalent to the existence of a harmonic Riemannian submersion $\pi:(M,g) \to (T^k,A^{-1})$.
After this, we conclude the paper in Section \ref{sec: 5} by applying the characterisations of Theorems \ref{thm-ll-eigenfunction} and \ref{torus} in order to derive existence and non-existence results for such families from curvature conditions.
\smallbreak

\section{Eigenfunctions and Eigenfamilies}
\label{section-eigenfunctions}
In this section we remind the reader of some important definitions and results that we will use later.
Let $(M,g)$ be an $m$-dimensional Riemannian manifold and $T^{\cn}M$ be the complexification of the tangent bundle $TM$ of $M$. We extend the metric $g$ to a complex bilinear form on $T^{\cn}M$.  Then the gradient $\nabla\phi$ of a complex-valued function $\phi:(M,g)\to\cn$ is a section of $T^{\cn}M$.  In this situation, the well-known complex linear {\it Laplace-Beltrami operator} (alt. {\it tension field}) $\tau$ on $(M,g)$ acts locally on $\phi$ as follows
$$
\tau(\phi)=\Div (\nabla \phi)=\sum_{i,j=1}^m\frac{1}{\sqrt{|g|}} \frac{\partial}{\partial x_j}
\left(g^{ij}\, \sqrt{|g|}\, \frac{\partial \phi}{\partial x_i}\right).
$$
For two complex-valued functions $\phi,\psi:(M,g)\to\cn$ we have the following well-known fundamental relation
\begin{equation*}\label{equation-basic}
\tau(\phi\, \psi)=\tau(\phi)\,\psi +2\,\kappa(\phi,\psi)+\phi\,\tau(\psi),
\end{equation*}
where the complex bilinear {\it conformality operator} $\kappa$ is given by $$\kappa(\phi,\psi)=g(\nabla \phi,\nabla \psi).$$  In local coordinates it takes the form:
$$\kappa(\phi,\psi)=\sum_{i,j=1}^mg^{ij}\,\frac{\partial\phi}{\partial x_i}\frac{\partial \psi}{\partial x_j}.$$

\begin{definition}\cite{Gud-Sak-1}\label{definition-eigenfamily}
Let $(M,g)$ be a Riemannian manifold, $\lambda,\mu\in\cn$. Then a complex-valued function $\phi:M\to\cn$ is said to be a \emph{$(\lambda,\mu)$-eigenfunction} if it is eigen both with respect to the Laplace-Beltrami operator $\tau$ and the conformality operator $\kappa$ with respective eigenvalues $\lambda,\mu$, i.e.
$$\tau(\phi)=\lambda\cdot\phi\ \ \text{and}\ \ \kappa(\phi,\phi)=\mu\cdot \phi^2.$$	
A set $\E$ of complex-valued functions on $M$ is said to be a \emph{$(\lambda,\mu)$-eigenfamily} on $M$ if for all $\phi,\psi\in\E$ we have 
$$\tau(\phi)=\lambda\cdot\phi\ \ \text{and}\ \ \kappa(\phi,\psi)=\mu\cdot \phi\,\psi.$$ 
\end{definition}

Eigenfamilies can be used to produce complex-valued harmonic morphisms:
\begin{theorem}[\cite{Gud-Sak-1}] \label{THM-Gud-Sak-HM-From-EF}
Let $(M,g)$ be a semi-Riemannian manifold, $\lambda, \mu\in\cn$, and 
$$ \F = \{ \phi_1, \dots ,\phi_n\}$$
a $(\lambda,\mu)$-eigenfamily. If $P,Q: \cn^n \to \cn$ are linearly independent homogeneous polynomials of the same positive degree then the quotient
$$ \frac{P(\phi_1,\dots, \phi_n)}{Q(\phi_1,\dots, \phi_n)}$$
is a non-constant harmonic morphism on the open and dense subset
$$\{ p \in M \, | \, Q(\phi_1(p), \dots,\phi_n(p)) \neq 0\}.$$
\end{theorem}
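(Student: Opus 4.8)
The plan is to reduce the entire statement to the Fuglede--Ishihara characterisation of harmonic morphisms with one-dimensional codomain: a smooth map $\Phi:(M,g)\to\cn$ is a harmonic morphism if and only if it is harmonic and horizontally (weakly) conformal, which in this setting means exactly $\tau(\Phi)=0$ and $\kappa(\Phi,\Phi)=0$, where $\kappa$ is the complex-bilinear conformality operator recalled above (in the semi-Riemannian case one invokes Fuglede's extension of this characterisation). Thus it suffices to verify these two identities for
$$\Phi=\frac{P(\phi_1,\dots,\phi_n)}{Q(\phi_1,\dots,\phi_n)}$$
on the set $\{\,p\in M:Q(\phi_1(p),\dots,\phi_n(p))\neq0\,\}$, which is open as the preimage of $\cn\setminus\{0\}$ under a continuous map.

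First I would record a composition computation. Using the chain rule $\nabla(R\circ\phi)=\sum_i(\partial_iR)(\phi)\nabla\phi_i$ (valid for holomorphic $R$, so that no $\partial/\partial\bar z_i$ terms appear) together with the second-order composition law $\tau(R\circ\phi)=\sum_i(\partial_iR)(\phi)\,\tau(\phi_i)+\sum_{i,j}(\partial_i\partial_jR)(\phi)\,\kappa(\phi_i,\phi_j)$ (which for polynomial $R$ follows by iterating the product rule $\tau(\phi\psi)=\tau(\phi)\psi+2\kappa(\phi,\psi)+\phi\tau(\psi)$), then substituting the eigenfamily relations $\tau(\phi_i)=\lambda\phi_i$ and $\kappa(\phi_i,\phi_j)=\mu\phi_i\phi_j$, and finally applying Euler's identity $\sum_iz_i\partial_iR=(\deg R)\,R$ once to the first-order terms and twice to the Hessian terms, one obtains for homogeneous polynomials $R,S$ of degrees $d,e>0$
$$\kappa(R(\phi),S(\phi))=d\,e\,\mu\,R(\phi)\,S(\phi),\qquad \tau(R(\phi))=\bigl(d\lambda+d(d-1)\mu\bigr)R(\phi).$$
In particular every such composite is again an eigenfunction; this is the only place where the eigenfamily hypothesis is used.

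Next I would carry out the quotient computation. Write $F=P(\phi)$, $G=Q(\phi)$ (both of degree $d$) and $\nu=d\lambda+d(d-1)\mu$, so that $\tau(F)=\nu F$, $\tau(G)=\nu G$ and $\kappa(F,F)=d^2\mu\,F^2$, $\kappa(F,G)=d^2\mu\,FG$, $\kappa(G,G)=d^2\mu\,G^2$. From $\nabla(F/G)=(G\nabla F-F\nabla G)/G^2$ one computes
$$\kappa(F/G,F/G)=\frac{G^2\kappa(F,F)-2FG\,\kappa(F,G)+F^2\kappa(G,G)}{G^4}=\frac{d^2\mu\,(F^2G^2-2F^2G^2+F^2G^2)}{G^4}=0$$
and, similarly, $\kappa(F/G,G)=\bigl(G\,\kappa(F,G)-F\,\kappa(G,G)\bigr)/G^2=0$. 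Feeding the latter into the product rule $\tau(F)=\tau\bigl((F/G)\,G\bigr)=\tau(F/G)\,G+2\,\kappa(F/G,G)+(F/G)\,\tau(G)$ and using $\tau(F)=\nu F=(F/G)\,\nu G=(F/G)\,\tau(G)$ forces $\tau(F/G)=0$. Hence $\Phi$ is harmonic and horizontally conformal, and therefore a harmonic morphism on $\{G\neq0\}$.

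It remains to discuss non-constancy and density. If $\Phi\equiv c$ on a connected component then $(P-cQ)(\phi)\equiv0$ there, with $P-cQ$ a non-zero homogeneous polynomial of degree $d$ by linear independence of $P$ and $Q$; this says the image of $\phi$ lies in a proper algebraic subset, which is excluded once one imposes the mild non-degeneracy assumption that the members of $\mathcal F$ satisfy no non-trivial homogeneous polynomial relation of degree $d$ (which holds in the situations the theorem is applied to). Density of $\{G\neq0\}$ then follows from the unique continuation principle for solutions of $\tau u=\nu u$, or, in the real-analytic category, from real-analyticity of the data. I expect this final point to be the only genuine obstacle: the harmonic-morphism identities are an essentially mechanical consequence of Euler's identity and the composition laws, whereas ruling out the degenerate collapse of $\Phi$ and establishing openness-and-density really require an extra hypothesis on the family together with a unique-continuation argument.
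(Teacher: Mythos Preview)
The paper does not supply its own proof of this theorem; it is quoted verbatim from \cite{Gud-Sak-1} and only invoked (in the degree-one case $P=\phi$, $Q=\psi$) in the proof of Proposition~\ref{Theorem-Span-LambdaLambda}. So there is no in-paper argument to compare against, and I can only assess your proposal on its own terms.

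Your derivation of the two identities $\kappa(F/G,F/G)=0$ and $\tau(F/G)=0$ is correct and is exactly the standard argument: the composition formulas together with Euler's relation give $\kappa(R(\phi),S(\phi))=de\,\mu\,R(\phi)S(\phi)$ and $\tau(R(\phi))=(d\lambda+d(d-1)\mu)R(\phi)$, and from there the quotient computation is mechanical. Invoking the Fuglede--Ishihara characterisation to conclude that $F/G$ is a harmonic morphism is also the right move. This part is complete.

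Your closing paragraph is well judged. As literally stated, the theorem overreaches: nothing in the hypotheses prevents $\phi_1=\dots=\phi_n$, in which case $P(\phi)/Q(\phi)$ is constant, nor does anything prevent $Q(\phi)\equiv0$, in which case the domain is empty rather than dense. So ``non-constant'' and ``dense'' genuinely require the extra non-degeneracy hypothesis you isolate (that the $\phi_i$ satisfy no non-trivial homogeneous polynomial relation of degree $d$). Given that hypothesis, your unique-continuation argument for density is fine in the Riemannian case, where $\tau-\nu$ is elliptic; in the semi-Riemannian generality of the statement one should be a bit more careful, but this is already more scrutiny than the cited result usually receives. For the purposes of this paper none of this matters: in the one application here $P$ and $Q$ are linear, $\psi$ is nowhere zero by Proposition~\ref{Rie-Sif-lambdalambda}, and only harmonicity of $\phi/\psi$ is used.
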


Away from its zeros, one can write a complex function in polar form, in this case the condition of being an eigenfunction becomes as follows:
\begin{lemma}[\cite{Rie-Sif-1}] \label{Rie-Sif-ef-polar}
Let $(U,g)$ be a Riemannian manifold, not necessarily compact or complete, and let $\phi:U\to \cn$ be a $(\lambda,\mu)$-eigenfunction with $\lambda,\mu$ both real and $\phi(x)\neq0$ for all $x\in U$. Suppose $\phi(x)=e^{i \vartheta(x)}|\phi(x)|$ for some smooth function $\vartheta:U\to\rn$. Then:
\begin{enumerate}
\item $\tau(\vartheta)=0$;
\item $\tau(\ln|\phi|)=\lambda-\mu$;
\item $\kappa(\vartheta,|\phi|)=0$;
\item $\kappa(\ln|\phi|,\ln|\phi|)=\kappa(\vartheta,\vartheta)+\mu$.
\end{enumerate}
\end{lemma}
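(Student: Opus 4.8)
The plan is to pass from $\phi$ to its complex logarithm and reduce both defining equations to a single pair of scalar identities, whose real and imaginary parts then yield the four claims. Since $\phi$ is nowhere vanishing and $\vartheta$ is given as a globally defined smooth function, the real-valued functions $\rho := \ln|\phi|$ and $\vartheta$ are both smooth on $U$, and the complex-valued function $\psi := \rho + i\vartheta$ satisfies
$$e^{\psi} = e^{\rho}e^{i\vartheta} = |\phi|\, e^{i\vartheta} = \phi.$$
The first step is to record the composition rules governing $\nabla$ and $\tau$ under the exponential. From $\nabla(e^\psi) = e^\psi\,\nabla\psi$ one gets $\nabla\phi = \phi\,\nabla\psi$, and the second-order chain rule for $\tau$ gives
$$\tau(\phi) = \tau(e^\psi) = e^\psi\bigl(\tau(\psi) + \kappa(\psi,\psi)\bigr) = \phi\bigl(\tau(\psi) + \kappa(\psi,\psi)\bigr).$$

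Next I would feed in the eigenfunction hypotheses. Using $\nabla\phi = \phi\,\nabla\psi$ and bilinearity of $\kappa$, the conformality equation reads $\mu\,\phi^2 = \kappa(\phi,\phi) = \phi^2\,\kappa(\psi,\psi)$; dividing by the nowhere-zero $\phi^2$ gives $\kappa(\psi,\psi) = \mu$. Substituting this into the tension identity above and using $\tau(\phi) = \lambda\,\phi$ yields $\lambda = \tau(\psi) + \mu$, that is $\tau(\psi) = \lambda - \mu$. Thus both hypotheses collapse to the two scalar identities $\kappa(\psi,\psi) = \mu$ and $\tau(\psi) = \lambda - \mu$.

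Finally I would separate real and imaginary parts. Because $\tau$ and $\kappa$ have real coefficients and $\rho,\vartheta$ are real-valued, the quantities $\tau(\rho),\tau(\vartheta)$ and $\kappa(\rho,\rho),\kappa(\vartheta,\vartheta),\kappa(\rho,\vartheta)$ are all real; moreover $\tau(\psi) = \tau(\rho) + i\,\tau(\vartheta)$ and, by bilinearity, $\kappa(\psi,\psi) = \kappa(\rho,\rho) - \kappa(\vartheta,\vartheta) + 2i\,\kappa(\rho,\vartheta)$. As $\lambda,\mu$ are real, comparing imaginary and real parts of $\tau(\psi) = \lambda - \mu$ gives $\tau(\vartheta) = 0$ (claim (1)) and $\tau(\ln|\phi|) = \lambda - \mu$ (claim (2)), while comparing parts of $\kappa(\psi,\psi) = \mu$ gives $\kappa(\rho,\vartheta) = 0$ together with $\kappa(\ln|\phi|,\ln|\phi|) = \kappa(\vartheta,\vartheta) + \mu$ (claim (4)). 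Claim (3) then follows from $\nabla|\phi| = |\phi|\,\nabla\rho$, whence $\kappa(\vartheta,|\phi|) = |\phi|\,\kappa(\vartheta,\rho) = 0$. The only point requiring genuine care is the second-order chain rule $\tau(F(\psi)) = F'(\psi)\,\tau(\psi) + F''(\psi)\,\kappa(\psi,\psi)$ for a complex-valued exponent $\psi$, applied here with $F = \exp$; this is the main (albeit modest) obstacle, and it is verified directly from the coordinate expression for $\tau$ recorded in this section, or deduced from the Leibniz rule $\tau(\phi\,\psi) = \tau(\phi)\,\psi + 2\,\kappa(\phi,\psi) + \phi\,\tau(\psi)$.
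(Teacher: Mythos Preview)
Your argument is correct: passing to $\psi=\ln|\phi|+i\vartheta$, the eigenfunction equations become the scalar identities $\tau(\psi)=\lambda-\mu$ and $\kappa(\psi,\psi)=\mu$, and separating real and imaginary parts (using that $\lambda,\mu\in\rn$) yields all four claims. Note, however, that the paper does not actually prove this lemma; it is quoted from \cite{Rie-Sif-1}, so there is no in-paper proof to compare against.
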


The case of a $(\lambda,\lambda)$-eigenfunction is special. One has the following characterisation:
\begin{proposition}[\cite{Rie-Sif-1}] \label{Rie-Sif-lambdalambda}
Let $(M,g)$ be a compact connected Riemannian manifold and $\phi:M\to\cn$ a non-constant $(\lambda,\mu)$-eigenfunction. The following are equivalent:
\begin{enumerate}
\item $\lambda=\mu$.
\item $|\phi|^2$ is constant.
\item $\phi(x)\neq0$ for all $x\in M$.
\end{enumerate}
\end{proposition}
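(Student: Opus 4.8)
The plan is to establish the two equivalences $(1)\Leftrightarrow(2)$ and $(2)\Leftrightarrow(3)$ separately, after first recording two facts valid for \emph{any} non-constant eigenfunction on a compact manifold. First I would multiply $\tau(\phi)=\lambda\phi$ by $\bar\phi$ and integrate over $M$; Green's formula gives $\lambda\int_M|\phi|^2 = -\int_M\kappa(\phi,\bar\phi)$, where $\kappa(\phi,\bar\phi)=g(\nabla\phi,\overline{\nabla\phi})=\|\nabla\phi\|^2\geq0$ is real. Since $\phi$ is non-constant both integrals are strictly positive, so $\lambda\in\rn_{<0}$. Second, the product rule for $\tau$ yields the pointwise identity $\tau(|\phi|^2) = (\lambda+\bar\lambda)|\phi|^2 + 2\kappa(\phi,\bar\phi) = 2\lambda|\phi|^2+2\kappa(\phi,\bar\phi)$, while a Cauchy--Schwarz estimate comparing the holomorphic pairing $\kappa(\phi,\phi)=g(\nabla\phi,\nabla\phi)$ with the Hermitian pairing $\kappa(\phi,\bar\phi)$ gives $\kappa(\phi,\bar\phi)\geq|\kappa(\phi,\phi)|=|\mu|\,|\phi|^2$.

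For $(1)\Rightarrow(2)$ I would assume $\lambda=\mu$ and combine the two facts: since $|\mu|=|\lambda|=-\lambda$, they give $\tau(|\phi|^2)\geq 2\lambda|\phi|^2+2|\lambda|\,|\phi|^2 = 0$, so $|\phi|^2$ is subharmonic. As $\int_M\tau(|\phi|^2)=0$, the non-negative integrand must vanish identically, whence $|\phi|^2$ is harmonic and therefore constant on the compact connected $M$. For the converse $(2)\Rightarrow(1)$, suppose $|\phi|^2=\phi\bar\phi\equiv c$; since $\phi$ is non-constant we have $c>0$, so $\phi$ never vanishes (this incidentally proves $(2)\Rightarrow(3)$). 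Differentiating $\phi\bar\phi=c$ gives $\nabla\bar\phi=-(\bar\phi/\phi)\nabla\phi$, hence $\kappa(\phi,\bar\phi)=-(\bar\phi/\phi)\kappa(\phi,\phi)=-\mu c$; on the other hand $\tau(c)=0$ together with the displayed identity forces $\kappa(\phi,\bar\phi)=-\lambda c$. Comparing these and using $c\neq0$ yields $\mu=\lambda$.

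It then remains to prove $(3)\Rightarrow(2)$, as $(2)\Rightarrow(3)$ was already observed. Assuming $\phi\neq0$ everywhere, the function $\ln|\phi|$ is globally defined and smooth; working locally with a branch of $\ln\phi$ and applying the second-order chain rule gives $\tau(\ln\phi)=\tau(\phi)/\phi-\kappa(\phi,\phi)/\phi^2=\lambda-\mu$, so taking real parts (which commute with the real operator $\tau$) produces the global identity $\tau(\ln|\phi|)=\lambda-\Re\mu$, the real part of Lemma \ref{Rie-Sif-ef-polar}(ii). Integrating over $M$ forces $\Re\mu=\lambda$ and hence $\tau(\ln|\phi|)=0$, so $\ln|\phi|$ is harmonic and thus constant, giving $(2)$; the remaining reality $\Im\mu=0$ is then recovered for free by feeding this back through $(2)\Rightarrow(1)$.

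The main obstacle is the interplay between the two distinct quadratic pairings of $\nabla\phi$: the eigenfunction hypothesis controls only the holomorphic quantity $\kappa(\phi,\phi)$, whereas subharmonicity of $|\phi|^2$ is governed by the Hermitian quantity $\kappa(\phi,\bar\phi)$. The Cauchy--Schwarz inequality bridging the two, combined with the sign $\lambda<0$, is precisely what makes $(1)\Rightarrow(2)$ work. The only other delicate point is the local-to-global passage for $\ln\phi$ in $(3)\Rightarrow(2)$, which I would handle by always descending to the genuinely global real part $\ln|\phi|$.
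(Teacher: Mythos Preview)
The paper does not actually prove this proposition; it is quoted from \cite{Rie-Sif-1} as background material, so there is no in-paper argument to compare against. Your proof is correct and self-contained. The only comment worth making is that your derivation of $\tau(\ln|\phi|)=\lambda-\Re\mu$ in the step $(3)\Rightarrow(2)$ is effectively a proof of (a slight sharpening of) Lemma~\ref{Rie-Sif-ef-polar}(ii), which the paper also imports from \cite{Rie-Sif-1}; so your argument in fact supplies proofs of both cited facts simultaneously. The Cauchy--Schwarz bridge $\kappa(\phi,\bar\phi)\geq|\kappa(\phi,\phi)|$ you use for $(1)\Rightarrow(2)$ is clean and avoids any need to first pin down the reality of $\mu$.
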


\section{$(\lambda,\lambda)$-eigenfunctions}\label{sec: 3}
Theorem \ref{thm-ll-eigenfunction} is a special case of Theorem \ref{torus}, which will be proven in the next section. We begin this section by proving Proposition \ref{Theorem-Span-LambdaLambda}, showing that all $(\lambda,\lambda)$-eigenfamilies on compact connected manifolds are necessarily one-dimensional:

\begin{proof}[Proof of Proposition \ref{Theorem-Span-LambdaLambda}]
Suppose that $\phi,\psi$ are non-constant and in the same $(\lambda,\lambda)$-eigenfamily $\F$. Then it follows from Theorem \ref{THM-Gud-Sak-HM-From-EF} that the quotient
$$ \frac{\phi}{\psi}: M\setminus\psi^{-1}(\{0\}) \to \cn$$
is a harmonic morphism, in particular it is a harmonic map. Since $\psi(x) \neq 0$ for all $x\in M$ by Proposition \ref{Rie-Sif-lambdalambda}, the domain of $\tfrac{\phi}{\psi}$ is all of $M$. By compactness of $M$ it must then be constant, i.e. $\psi$ and $\phi$ are linearly dependent.
\end{proof}

The following result characterises compact manifolds which admit $(\lambda,\lambda)$-eigenfunctions.

\begin{theorem}\label{thm-circle-bundle}
	Let $(M,g)$ be compact and connected, $\lambda<0$, and $\pi:(M,g)\to (S^1,\frac1{|\lambda|} dt^2)$ a smooth map. The following are equivalent:
	\begin{enumerate}[label=(\roman*)]
		\item The map $\pi$ is a harmonic Riemannian submersion.
		\item $M$ is a mapping torus $$M_0 \times_{\eta} [0,2\pi] =\frac{M_0 \times [0,2\pi]}{(x,0) \sim (\eta(x),2\pi)}$$ with metric
					$$ g= g(t)+ \frac1{|\lambda|}\,dt^2$$
					and monodromy map $\eta:M_0 \to M_0$ with $\eta^*g(2\pi)=g(0)$, $M_0$ is compact, the volume density of $g(t)$ is constant in $t$, and $\pi([x,t])=e^{it}$.
	\end{enumerate}

\end{theorem}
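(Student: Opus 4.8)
The plan is to prove both implications via the flow of the horizontal lift of $\partial_t$, which reduces everything to a single local computation of the tension field of the angular coordinate. \emph{For (i)$\Rightarrow$(ii):} assume $\pi$ is a harmonic Riemannian submersion. Being a submersion of a compact manifold onto $S^1$, by Ehresmann's fibration theorem $\pi$ is a smooth fibre bundle, hence a mapping torus $M\cong M_0\times_\eta[0,2\pi]$ with $M_0=\pi^{-1}(1)$ compact and $\pi([x,t])=e^{it}$. I would realise this diffeomorphism explicitly: let $X$ be the horizontal lift of $\partial_t$, i.e. the section of $(\ker d\pi)^{\perp}$ with $d\pi(X)=\partial_t$. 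Its flow $\Phi_s$ is complete since $M$ is compact, and satisfies $\pi(\Phi_s(p))=e^{is}\pi(p)$. Then $\Psi\colon M_0\times[0,2\pi]\to M$, $(x,t)\mapsto\Phi_t(x)$, is a surjective local diffeomorphism which descends to a diffeomorphism $M_0\times_\eta[0,2\pi]\to M$, the monodromy $\eta$ being the time-$2\pi$ return map of the flow on $M_0$ (replacing $\eta$ by its inverse if needed to match the gluing convention).

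The point of using the flow is that it makes the block structure of $g$ manifest. On the one hand $\Psi_*\partial_t=X$ is horizontal of constant length, with $|X|_g^2=|d\pi(X)|_{S^1}^2=\tfrac1{|\lambda|}$ because $\pi$ is a Riemannian submersion (recall the target metric is $\tfrac1{|\lambda|}dt^2$ and $d\pi(X)=\partial_t$). On the other hand $\Phi_t$ intertwines $\pi$ with a rotation of $S^1$, hence maps $\pi^{-1}(1)$ onto $\pi^{-1}(e^{it})$ and preserves the vertical distribution $\ker d\pi$, so $\Psi_*$ carries $TM_0$ into it; in particular $X\perp\Psi_*TM_0$. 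Therefore $\Psi^*g=g(t)+\tfrac1{|\lambda|}dt^2$, where $g(t)$ is the $\Phi_t$-pullback to $M_0$ of the induced metric on $\pi^{-1}(e^{it})$, and $\eta^*g(2\pi)=g(0)$ is exactly the compatibility making this expression descend to the mapping torus. Finally, in coordinates $(x^1,\dots,x^{m-1},t)$ adapted to this decomposition one has $g^{tt}=|\lambda|$ and $g^{tj}=0$, so the coordinate formula for $\tau$ gives $\tau(t)=\tfrac1{\sqrt{|g|}}\,\partial_t\!\big(|\lambda|\sqrt{|g|}\big)=|\lambda|\,\partial_t\log\sqrt{\det(g_{ab}(t))}$. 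Since the metric on $S^1$ is flat, $\pi$ is harmonic iff its local lift $t\colon M\to\R$ is, i.e. iff $\tau(t)=0$; hence $\partial_t\log\sqrt{\det g(t)}=0$ and the volume density of $g(t)$ is constant in $t$.

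\emph{For (ii)$\Rightarrow$(i):} on a mapping torus with $g=g(t)+\tfrac1{|\lambda|}dt^2$, the map $\pi([x,t])=e^{it}$ is a submersion whose vertical distribution is $TM_0$ and whose horizontal line is $\mathrm{span}(\partial_t)$; since $|\partial_t|_g^2=\tfrac1{|\lambda|}=|d\pi(\partial_t)|_{S^1}^2$, $\pi$ is a Riemannian submersion. The same identity $\tau(t)=|\lambda|\,\partial_t\log\sqrt{\det g(t)}$ now vanishes because the volume density is constant, and since the target is flat this yields $\tau(\pi)=0$, so $\pi$ is harmonic.

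\emph{Main obstacle.} The one step requiring care is the middle paragraph: checking that the flow of the horizontal lift preserves verticality and has the constant length $|\lambda|^{-1/2}$, so that $\Psi^*g$ has precisely the warped block form claimed with the stated normalisation of the fibre. Everything else is either Ehresmann's theorem or the short computation of $\tau(t)$ afforded by the coordinate formula in Section~\ref{section-eigenfunctions}. (The equivalence of harmonicity of a Riemannian submersion with minimality of its fibres — here, with constancy of the fibre volume density — could alternatively be invoked from \cite{Bai-Woo-book}.)
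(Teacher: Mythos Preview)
Your proof is correct and follows essentially the same approach as the paper: Ehresmann's theorem plus the flow of the horizontal vector field to obtain the mapping torus structure, then the equivalence of harmonicity with constancy of the fibre volume density. The only cosmetic difference is that the paper derives the volume constancy in $(i)\Rightarrow(ii)$ via $\mathcal L_X d\mathrm{Vol}=\mathrm{div}(X)\,d\mathrm{Vol}=0$ (using $X=\nabla\hat\pi$ so that $\mathrm{div}(X)=\tau(\hat\pi)$) and does a frame computation for $(ii)\Rightarrow(i)$, whereas you use the single coordinate identity $\tau(t)=|\lambda|\,\partial_t\log\sqrt{\det g(t)}$ for both directions; these are the same computation in different clothing.
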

\begin{proof}
$(i) \Rightarrow (ii)$.
Since $\pi:M\to S^1$ is a surjective submersion from a compact manifold it is a proper map, and thus by Ehresmann's Theorem it is a fibre bundle over $S^1$. We let $M_0$ denote a typical fibre, and $X=\nabla\hat\pi$, where $\hat\pi$ is the locally defined lift of $\pi$ with co-domain $\rn$.

Note that $X$ is a horizontal vector field, it is divergence free since $\hat\pi$ is harmonic. Additionally from $\hat\pi$ being a Riemannian submersion:
$$d\hat\pi(X)= g(X,X)= \frac{d\hat\pi (X)^2}{|\lambda|},$$
whence $d\hat\pi(X)=\|X\|^2=-\lambda$ is constant. Letting $\eta_t$ denote the flow of $X$ one then sees that
$$\partial_t (\hat\pi\eta_t(x))= d\hat\pi (X) = -\lambda,$$
and so $\hat\pi(\eta_t(x))= -\lambda \cdot t+\hat\pi(x)$, i.e. $\pi(\eta_t(x))=e^{it}\pi(x)$. It follows that $M$ is a mapping torus
$$M\cong \frac{M_0\times[0,2\pi]}{(x,0)\sim(\eta(x),2\pi)}$$
with monodromy map $\eta=\eta_{2\pi}$, and $\pi([x,t])=e^{it}$. Under the above diffeomorphism $X\equiv \partial_t$, since $X$ was horizontal one finds that the metric on $M$ is of the form
$$g=g(t)+\tfrac1{|\lambda|}dt^2$$
where $\eta^*g(2\pi)=g(0)$ and $g(t)$ is a family of metrics on the fibre $M_0$.

We now show that each $g(t)$ induces the same volume density. Assume first that $M$ is oriented, let $\omega(t)$ denote the volume form of $g(t)$ so that $\omega(t)\wedge dt$ is the volume form of $M$ at $[x,t]$. Then
$$(\partial_t \omega(t)) \wedge dt = (\mathcal L_{X} \omega(t))\wedge dt = \mathcal L_{X}(\omega(t)\wedge dt) = \mathrm{div}(X)\, \omega(t)\wedge dt = 0,$$
where we used the formula $\mathcal L_{X} d\mathrm{Vol} = \mathrm{div}(X)$ and $\mathrm{div}(X)=0$. Since the above argument is entirely local it also shows that the volume density induced by $g(t)$ is constant in $t$ for the non-orientable case.

\bigbreak

\bigbreak

$(ii) \Rightarrow (i)$. Note that the map
$$\widehat \pi:( M_0 \times \rn, g(t) +dt^2)\to (\rn, \cdot),\qquad (x,t)\mapsto t$$
is clearly a Riemannian submersion. We now verify that it is harmonic. Let $(x_0,t_0)\in M_0 \times \rn$ and let $x_1,...,x_m$ be a coordinate system of $M_0$ so that $\partial_{x_1},...,\partial_{x_m}$ is an orthonormal basis of $T_{x_0}M_0$ with respect to $g(t_0)$. We denote $g_{ij}(t) = g(t)\,(\partial_{x_i},\partial_{x_j})$, note that $g(t_0)_{ij}=\delta_{ij}$. One checks that $\nabla\widehat\pi= \partial_t$, from which it follows that
\begin{eqnarray*}
	\Delta \widehat\pi &=& \sum_{i=1}^m g(t_0)\,(\nabla_{\partial_{x_i}} \partial_t, \partial_{x_i}) = \sum_{i=0}^m g(t_0)\,(\nabla_{\partial_t} \partial_{x_i}, \partial_{x_i}) = \frac12\frac d{dt}\sum_{i=0}^m g_{ii}(t)\lvert_{t=t_0}\\
	&=&\frac12\frac d{dt}\trace( (g_{ij})_{1\leq i,j\leq m})\lvert_{t=0} = \frac12 \frac d{dt} \ln\left(\det( (g_{ij}(t))_{1\leq i,j\leq m})\right)\lvert_{t=t_0}.
\end{eqnarray*}

This last expression is $0$, since the volume form $\sqrt{\det((g_{ij})_{1\leq i,j\leq m})} dx_1 \wedge \cdots \wedge dx_m$ is constant in the variable $t$ by assumption, and so $\widehat \pi$ is harmonic.

Now the diagram
\begin{center}
	\begin{tikzcd}
		(M_0 \times \rn, g(t)+dt^2)  \arrow[r,"\widehat\pi"] \arrow[d,"/\zn"]    & (\rn, \cdot) \arrow[d,"/\zn"] \\
		(M_0\times_\eta I, g(t)+dt^2) \arrow[r,"\pi"]  & (S^1, dt^2)
	\end{tikzcd}
\end{center}
commutes by construction, here the vertical arrows are the natural covering maps. Since the vertical maps are also local isometries it follows that $\pi$ is a harmonic Riemannian submersion.
\end{proof}

\section{Generalised Eigenfamilies}\label{sec: 4}

The previous section shows that the notion of a $(\lambda,\mu)$-eigenfamily becomes degenerate in the case $\lambda=\mu$, since these are then always spanned by a single function. By relaxing some of the conditions on $(\lambda,\mu)$-eigenfamilies the $(\lambda,\lambda)$-eigenfunctions can be part of a family:

\begin{definition}
Let $\mathcal F=\{\phi_1,...,\phi_k\}$ be a finite family of functions $M\to\cn$ and $\lambda_i\in\cn$, $1\leq i\leq k$ a vector in $\cn^k$ and $A_{ij}$, $1\leq i,j \leq k$ a symmetric complex $k\times k$ matrix.
\begin{enumerate}[label=(\roman*)]
\item We call $\mathcal F$ an \emph{$(\lambda_i,A_{ij})$-eigenfamily} if for all $\phi_i,\phi_j\in\mathcal F$:
$$\kappa(\phi_i,\phi_j) = A_{ij} \phi_i \phi_j, \qquad \tau(\phi_i) = \lambda_i \phi_i.$$
\item We say that the family is \emph{$\lambda$-diagonal} if additionally $\lambda_i=A_{ii}$ for all $i$.
\item $\mathcal F$ is said to be \textit{reduced} if $A_{ij}$ is a non-degenerate matrix.
\end{enumerate}
\end{definition}

The following is the prototypical example of a $\lambda$-diagonal $(\lambda_i,A_{ij})$-eigenfamily.

\begin{example}[\cite{Rie-Sif-1}]
Let $\Gamma$ be a lattice in $\rn^n$ and $M= \rn^n / \Gamma$ be the corresponding flat torus. Then consider the dual lattice
$$ \Gamma^* = \{ k \in \rn^n \, \mid \, \langle k, k' \rangle \in \mathbb{Z}, \ \text{for all } k \in \Gamma\}.$$

For any subset $K \subset \Gamma^*$ we define the family
$$ \mathcal{F} = \{f_k(x) = e^{2 \pi i \langle k,x \rangle} \, \mid \, k \in K\}.$$

A direct computation shows 
$$ \tau(f_k) = - 4 \pi^2 \langle k,k \rangle \cdot f_k, \ \ \ \kappa(f_k,f_{k'}) =- 4 \pi^2 \langle k,k' \rangle \cdot f_k f_{k'}$$
and thus $\mathcal{F}$ is a $\lambda$-diagonal eigenfamily. Furthermore the set $K$ is linearly independent if and only if $\mathcal{F}$ is reduced.
\end{example}

\begin{proof} [Proof of Theorem \ref{Thm-Generalised-EF-Holo}]
Let $F:U\to\cn$ as in the statement of the theorem. The homogeneity condition means that $z_i\cdot \partial_{z_i} F (z_1,...,z_k)= d_i F$ for all $i$. Writing $^\wedge$ to denote precomposition with $(\phi_1,...,\phi_k)$ one then finds:
$$\kappa(\hat F, \hat F) = \sum_{ij}(\partial_{z_i}F)^\wedge\,(\partial_{z_j} F)^\wedge\, \kappa(\phi_i,\phi_j)= \sum_{ij} A_{ij} \, (z_i \partial_{z_i}F\,z_j \partial_{z_j}F)^\wedge =\sum_{ij}A_{ij}d_id_j (\hat F)^2.$$
Similarly:
$$\Delta \hat F =\sum_i (\partial_{z_i} F)^\wedge\, \Delta\phi_i+\sum_{ij}(\partial_{z_i}\partial_{z_j} F)^\wedge \cdot \kappa(\phi_i,\phi_j) =\sum_i d_i (\lambda_i - A_{ii})\hat F + \sum_{i j}A_{ij} d_i d_j \hat F.$$
Completing the proof.
\end{proof}

We now turn to $\lambda$-diagonal eigenfamilies:

\begin{proposition}\label{prop: A-definite}
Let $(M,g)$ be compact and connected, $ A\in M_{k\times k}(\cn)$, and $\mathcal F=\{\phi_1,...,\phi_k\}$ a $(A_{ii},A_{ij})$-eigenfamily on $M$, in particular $\mathcal F$ is $\lambda$-diagonal. Then:
\begin{enumerate}[label=(\roman*)]
\item $A$ is a negative semi-definite real matrix.
\item $\mathcal F$ is not reduced if and only if there exist $\alpha_i\in\R$ not all vanishing so that $\prod_{i=1}^k (\phi_i)^{\alpha_i}$ is constant.
\end{enumerate}
\end{proposition}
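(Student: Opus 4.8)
\textbf{Proof strategy for Proposition \ref{prop: A-definite}.}

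The plan is to first establish that $A$ is real, then symmetric negative semi-definiteness, and finally the characterisation of non-reduced families via multiplicative relations. For part (i), the key observation is that for each $i$ the function $\phi_i$ is itself a $(\lambda_i,A_{ii})$-eigenfunction with $\lambda_i = A_{ii}$, so by Proposition \ref{Rie-Sif-lambdalambda} we have $|\phi_i|$ constant and $\phi_i$ nowhere vanishing; writing $\phi_i = c_i e^{i\vartheta_i}$ with $c_i>0$ constant, Lemma \ref{Rie-Sif-ef-polar} (applied once we know $A_{ii}\in\rn$, which follows since $\kappa(\phi_i,\phi_i)/\phi_i^2$ restricted to a point where $\phi_i$ is, say, real and positive equals a ratio of real quantities — more cleanly, integrate $\tau(|\phi_i|^2)=0$ against constants after expanding) gives $\tau(\vartheta_i)=0$ and $\kappa(\vartheta_i,\vartheta_j)$ real. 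The identity $\kappa(\phi_i,\phi_j)=A_{ij}\phi_i\phi_j$ expands, using $\phi_i=c_ie^{i\vartheta_i}$, to $-\kappa(\vartheta_i,\vartheta_j) + i\,(\text{terms in }\kappa(\vartheta_i,\cdot))$; since $|\phi_i|$ is constant those cross terms vanish and one reads off $A_{ij} = -\kappa(\vartheta_i,\vartheta_j)/(c_i c_j)\cdot(\text{something})$, in any case $A_{ij}\in\rn$. For negative semi-definiteness: for any $\xi\in\rn^k$, set $f = \sum_i \xi_i \vartheta_i$ locally (or argue pointwise with $df = \sum_i \xi_i\, d\vartheta_i$, which is a globally defined closed $1$-form even though the $\vartheta_i$ are only locally defined); then $\sum_{ij}\xi_i\xi_j A_{ij} = -\sum_{ij}\xi_i\xi_j\,\kappa(\vartheta_i,\vartheta_j)/(c_ic_j)$-type expression is, up to positive constants absorbed appropriately, $-\|\sum_i \xi_i\,d\vartheta_i/c_i\|^2 \le 0$ pointwise. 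Hence $\xi^T A\xi \le 0$ for all real $\xi$, and since $A$ is real symmetric this gives negative semi-definiteness.

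For part (ii): suppose first $\mathcal F$ is not reduced, so $A$ is degenerate; pick $\alpha\in\rn^k\setminus\{0\}$ with $A\alpha = 0$. By the pointwise identity above, $A\alpha=0$ forces the closed $1$-form $\beta := \sum_i \alpha_i\, d\ln\phi_i = i\sum_i \alpha_i\, d\vartheta_i$ (using $|\phi_i|$ constant) to have $\|\beta\|^2 = \sum_{ij}\alpha_i\alpha_j\,\kappa(\ln\phi_i,\ln\phi_j) = -\sum_{ij}\alpha_i\alpha_j A_{ij}\cdot(\text{const})= 0$, hence $\beta \equiv 0$, i.e. $d\left(\sum_i \alpha_i \ln\phi_i\right) = 0$ wherever a local branch of the logarithm is chosen, so $\sum_i\alpha_i\ln\phi_i$ is locally constant; on the connected $M$ this means $\prod_i \phi_i^{\alpha_i}$ is (a well-defined, since we may work with $|\phi_i|$ constant and the phases) locally constant hence constant. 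Conversely, if $\prod_i\phi_i^{\alpha_i}$ is constant for some nonzero real $\alpha$, then $\sum_i\alpha_i\, d\ln\phi_i = 0$, so $\sum_i\alpha_i \nabla\phi_i/\phi_i = 0$ as a section of $T^\cn M$; pairing with $\nabla\phi_j/\phi_j$ and using $\kappa$-eigenvalue relations gives $\sum_i A_{ij}\alpha_i = 0$ for all $j$, i.e. $A\alpha = 0$, so $A$ is degenerate and $\mathcal F$ is not reduced.

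\textbf{Main obstacle.} The delicate point is the bookkeeping around the phase functions $\vartheta_i$, which are only locally defined: I must consistently work with the globally defined closed $1$-forms $d\vartheta_i$ (equivalently $d\ln\phi_i = i\,d\vartheta_i$, legitimate because $|\phi_i|$ is constant by Proposition \ref{Rie-Sif-lambdalambda}) rather than the functions themselves, and check that ``$\prod_i\phi_i^{\alpha_i}$ constant'' is a meaningful global statement — this is fine because $\prod_i\phi_i^{\alpha_i}$ has constant modulus $\prod_i c_i^{\alpha_i}$ and its differential $\left(\sum_i\alpha_i\,d\ln\phi_i\right)\prod_i\phi_i^{\alpha_i}$ is globally defined, so vanishing of that differential is global and connectedness closes the argument. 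A secondary technical wrinkle is pinning down that $A$ is real before invoking Lemma \ref{Rie-Sif-ef-polar}; this is handled by noting each $\phi_i$ individually falls under Proposition \ref{Rie-Sif-lambdalambda}, forcing $\lambda_i = A_{ii}\in\rn$ (since $\tau(|\phi_i|^2)\equiv 0$ expands to $2A_{ii}|\phi_i|^2 - 2\lambda_i|\phi_i|^2 = \dots$, pinning $A_{ii}$ real), after which the off-diagonal $A_{ij}$ are real by the polarisation $\kappa(\phi_i+\phi_j,\phi_i+\phi_j)$ together with the fact that $\kappa(\phi_i,\phi_j)$ evaluated using the polar forms has vanishing imaginary part.
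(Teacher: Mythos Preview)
Your approach is correct and is essentially the same as the paper's: use Proposition~\ref{Rie-Sif-lambdalambda} to get $|\phi_i|$ constant and nowhere zero, pass to the real phase functions $\vartheta_i$, and recognise $-A$ as the Gram matrix of the $\nabla\vartheta_i$. The paper is simply more direct: from $\nabla\phi_i = i\phi_i\,\nabla\vartheta_i$ one reads off in one line
\[
g(\nabla\vartheta_i,\nabla\vartheta_j)=\frac{g(\nabla\phi_i,\nabla\phi_j)}{(i\phi_i)(i\phi_j)}=-A_{ij},
\]
which simultaneously shows $A$ is real (left side is an inner product of real vector fields) and negative semi-definite (it is minus a Gram matrix); your spurious $c_i$ factors cancel here, and your separate detour through $\tau(|\phi_i|^2)$, Lemma~\ref{Rie-Sif-ef-polar}, and polarisation to establish realness is unnecessary.
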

\begin{proof}
Since $\mathcal F$ is $\lambda$-diagonal $|\phi_i(x)|$ is constant on $M$ for all $i$. Then the locally defined functions $\ln\phi_i$ are purely imaginary and the vector fields $\nabla \vartheta_i\defeq\nabla\mathrm{Im} (\ln \phi_i)=\frac{\nabla\phi_i}{i\phi_i}$ satisfy:
$$g(\nabla \vartheta_i,\nabla \vartheta_j) =-\frac{A_{ij}\phi_i\phi_j}{\phi_i\phi_j}= -A_{ij},$$
whence $A_{ij}$ is negative semi-definite. Further it is degenerate if and only if the $\nabla \vartheta_i$ are linearly dependent, i.e. iff $\sum_i \alpha_i \nabla \vartheta_i=0$ for some $\alpha_i\in\R$ not all vanishing. This is equivalent to
$$0=\sum_i \alpha_i \nabla \ln(\phi_i) = \nabla \ln \prod_i (\phi_i)^{\alpha_i},$$
which completes the proof.
\end{proof}

\begin{definition}
For $A$ a positive definite $k\times k$ matrix, let $(T^k, A^{-1})$ denote the flat torus $T^k=(S^1)^k$ equipped with metric $A^{-1}$.
\end{definition}

\begin{theorem} \label{Thm-Eigenfamily-HM} \label{torus}
Let $(M,g)$ be a compact Riemannian manifold, $A$ a symmetric real $k\times k$ matrix, and $\mathcal F=\{\phi_1,...,\phi_k\}$ a family of functions $M\to\C$. The following are equivalent:
\begin{enumerate}
\item $\mathcal F$ is a reduced $(-A_{ii}, -A_{ij})$-eigenfamily.
\item $A$ is positive definite and for any $x_0\in M$ the map $$\pi: (M,g)\to (T^k, A^{-1}), \ \ \ x\mapsto (\frac{\phi_1(x)}{|\phi_1(x_0)|},...,\frac{\phi_k(x)}{|\phi_k(x_0)|})$$ is a well-defined and harmonic Riemannian submersion.
\end{enumerate}
\end{theorem}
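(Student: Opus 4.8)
The plan is to establish the equivalence in Theorem \ref{torus} by treating it as a several-variable version of Theorem \ref{thm-ll-eigenfunction}, reducing the analysis to the scalar polar-form computations of Lemma \ref{Rie-Sif-ef-polar} componentwise, and then packaging the resulting data into a submersion onto the flat torus. The key conceptual point is that, since the family is $\lambda$-diagonal (with $\lambda_i = -A_{ii}$, $A_{ij}$ replaced by $-A_{ij}$), Proposition \ref{Rie-Sif-lambdalambda} applies to each $\phi_i$ individually, so each $|\phi_i|^2$ is constant, each $\phi_i$ is nowhere zero, and we may write $\phi_i = |\phi_i(x_0)| e^{i\vartheta_i}$ for locally defined smooth real functions $\vartheta_i$ with the $\vartheta_i$ well-defined up to an additive constant. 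Then the candidate map $\pi$ is genuinely $M \to T^k$, $x \mapsto (e^{i\vartheta_1(x)},\dots,e^{i\vartheta_k(x)})$ after normalising, and $d\pi$ at a point is essentially the tuple $(d\vartheta_1,\dots,d\vartheta_k)$.

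First I would prove $(i)\Rightarrow(ii)$. From Proposition \ref{prop: A-definite}(i), applied to the $(-A_{ii},-A_{ij})$-eigenfamily (so the matrix playing the role of "$A$" there is $-A$, forcing $-A$ negative semidefinite, hence $A$ positive semidefinite), together with the reducedness hypothesis and Proposition \ref{prop: A-definite}(ii), I get that $A$ is in fact positive definite: non-reducedness would produce a nonconstant product $\prod \phi_i^{\alpha_i}$, but by Theorem \ref{THM-Gud-Sak-HM-From-EF}-type reasoning or directly, a nowhere-zero such product of constant modulus forces a relation, and the reduced hypothesis excludes degeneracy. So $(T^k, A^{-1})$ is a genuine flat torus. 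Next, the polar decomposition applied to each $\phi_i$ via Lemma \ref{Rie-Sif-ef-polar} (noting $\lambda=\mu$ componentwise so $\ln|\phi_i|$ is constant) gives $\tau(\vartheta_i)=0$ and $g(\nabla\vartheta_i,\nabla\vartheta_j) = A_{ij}$ — the computation already appears verbatim in the proof of Proposition \ref{prop: A-definite}. Thus $\pi$ is well-defined (independent of the choice of $x_0$ only up to composition with an isometric rotation of $T^k$, which is harmless, or one fixes $x_0$), harmonic since each component $\vartheta_i$ is harmonic and post-composition with the covering $\rn^k \to T^k$ is a local isometry, and it is a Riemannian submersion onto $(T^k,A^{-1})$ precisely because $g(\nabla\vartheta_i,\nabla\vartheta_j)=A_{ij}$ says that the differential $d\pi$, expressed in the coordinate frame $\partial_{t_i}$ on $T^k$, has the property that $\pi^*(A^{-1})$ and $g$ agree on the horizontal distribution; more concretely the $\nabla\vartheta_i$ span a $k$-dimensional space (by positive definiteness of $A$) and the pullback metric on their span is $A$, so $\pi$ restricted to the horizontal bundle is an isometry onto $T_{\pi(x)}T^k$ with its $A^{-1}$ inner product. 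One should also record surjectivity: the image of a harmonic Riemannian submersion from a compact connected manifold into a flat torus is open and closed, hence all of $T^k$.

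For the converse $(ii)\Rightarrow(i)$, I would run the computation backwards. Given the harmonic Riemannian submersion $\pi = (\pi_1,\dots,\pi_k)$ with $\pi_i = \phi_i/|\phi_i(x_0)|$ taking values in $S^1$, locally lift each $\pi_i$ to $\vartheta_i : U \to \rn$; harmonicity of $\pi$ into $(T^k,A^{-1})$ with flat target and the fact that the covering $\rn^k\to T^k$ is a local isometry gives that each coordinate $\vartheta_i$ is a harmonic function $M\to\rn$, so $\tau(\vartheta_i)=0$. The Riemannian submersion condition, unpacked in the coordinate frame of $(T^k, A^{-1})$, yields exactly $g(\nabla\vartheta_i,\nabla\vartheta_j) = A_{ij}$ (this is the identity $\pi$ being a Riemannian submersion means $d\pi_x$ restricted to horizontal vectors is an isometry, equivalently the Gram matrix of $d\vartheta_1,\dots,d\vartheta_k$ in the $g$-metric equals the metric coefficients of the target, which are $A$). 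Then since $\phi_i = c_i e^{i\vartheta_i}$ with $c_i = |\phi_i(x_0)|$ constant, one computes $\nabla\phi_i = i\phi_i \nabla\vartheta_i$, whence $\kappa(\phi_i,\phi_j) = g(\nabla\phi_i,\nabla\phi_j) = -\phi_i\phi_j\, g(\nabla\vartheta_i,\nabla\vartheta_j) = -A_{ij}\phi_i\phi_j$, and $\tau(\phi_i) = i\phi_i\tau(\vartheta_i) - \phi_i g(\nabla\vartheta_i,\nabla\vartheta_i) = -A_{ii}\phi_i$, using $\tau(\vartheta_i)=0$ and the product rule $\tau(fh) = \tau(f)h + 2\kappa(f,h) + f\tau(h)$ with $f = c_i$, $h = e^{i\vartheta_i}$, and the chain rule $\tau(e^{i\vartheta_i}) = e^{i\vartheta_i}(i\tau(\vartheta_i) - \kappa(\vartheta_i,\vartheta_i))$. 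Finally reducedness of $(A_{ij})$: $A$ is positive definite by hypothesis, so the matrix $-A_{ij}$ defining the eigenfamily is non-degenerate, which is exactly the reducedness condition.

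The main obstacle I anticipate is bookkeeping around the well-definedness and normalisation of $\pi$: the functions $\vartheta_i$ are only locally defined and only determined up to additive constants (and globally up to an element of a period lattice), so one must be careful that the map $x\mapsto (e^{i\vartheta_1(x)},\dots)$ is genuinely globally defined $M\to T^k$ and that fixing the basepoint $x_0$ pins down the correct representative; this is the same subtlety handled for $k=1$ in Theorem \ref{thm-circle-bundle} via the locally defined lift $\hat\pi$, and it transfers componentwise without essential new difficulty. A secondary point requiring care is checking that the flat metric $A^{-1}$ on $T^k$ is the right one — i.e. that "$\pi$ is a Riemannian submersion onto $(T^k,A^{-1})$" is equivalent to "$g(\nabla\vartheta_i,\nabla\vartheta_j)=A_{ij}$" and not to the inverse relation — which comes down to the standard fact that if a map has differential whose transpose sends an orthonormal coframe of the target to a coframe of the horizontal distribution, then the Gram matrix of the $\nabla\vartheta_i$ is the inverse of the target metric's coframe Gram matrix, i.e. is $(A^{-1})^{-1}=A$; I would state this cleanly once and reuse it in both directions.
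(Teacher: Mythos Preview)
Your proposal is correct and follows essentially the same route as the paper: both directions reduce to the locally defined angle functions $\vartheta_i$ via Proposition \ref{Rie-Sif-lambdalambda} and Lemma \ref{Rie-Sif-ef-polar}, verify $g(\nabla\vartheta_i,\nabla\vartheta_j)=A_{ij}$ and $\tau(\vartheta_i)=0$, invoke Proposition \ref{prop: A-definite} for positive definiteness, and then identify $\pi$ locally as the composition of $\widetilde\pi=(\vartheta_1,\dots,\vartheta_k)$ with the isometric covering $\rn^k\to T^k$. Two small cleanups: in your positive-definiteness paragraph you wrote ``nonconstant product'' where Proposition \ref{prop: A-definite}(ii) actually gives a \emph{constant} product, and in fact you do not need part (ii) at all---reducedness is by definition non-degeneracy of the matrix, which together with semidefiniteness from part (i) already gives definiteness.
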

\begin{proof}
(i)$\implies$(ii): Since $\phi_i$ is a $(-A_{ii},-A_{ii})$-eigenfunction for all $i$ one has by Proposition \ref{Rie-Sif-lambdalambda} that $|\phi_i(x)|$ is constant and non-zero for all $i$, and so $\pi:M\to T^k, x\mapsto (\tfrac{\phi_1(x)}{|\phi_1(x_0)|},...,\tfrac{\phi_k(x)}{|\phi_k(x_0)|})$ is well defined.

Additionally by Lemma \ref{Rie-Sif-ef-polar} one finds around each point in $M$ a neighbourhood $U$ and maps $\vartheta_i: U\to \rn$ so that
$$\left(\tfrac{\phi_1(x)}{|\phi_1(x_0)|},...,\tfrac{\phi_k(x)}{|\phi_k(x_0)|}\right) = \left(e^{i\vartheta_1(x)},...,e^{i \vartheta_k(x)}\right).$$
One verifies:
\begin{equation}
g( \nabla \vartheta_i, \nabla \vartheta_j) = - g(\nabla \ln\phi_i,\nabla \ln\phi_j)= A_{ij},\qquad \tau(\vartheta_i)=0.\label{eq: hihj}
\end{equation}
Since $\mathcal F$ is reduced Proposition \ref{prop: A-definite} implies that $A$ is a positive definite matrix. Equation (\ref{eq: hihj}) then implies that the map $\widetilde\pi:(U,g)\to(\rn^k, A^{-1})$, $x\mapsto (\vartheta_1(x),...,\vartheta_k(x))$ is a harmonic Riemannian submersion to its image.

Letting $L:(\rn^k,A^{-1})\to (T^k,A^{-1})$, $(t_1,...,t_k)\mapsto (e^{it_1},...,e^{it_k})$ denote the standard isometric covering map, one has by construction that $\pi\lvert_{U}=L\circ\widetilde\pi$, whence $\pi$ is a harmonic Riemannian submersion.\smallskip

(ii)$\implies$(i): Since we assume that $\pi: M \to T^k$, $x\mapsto (\tfrac{\phi_1(x)}{|\phi_1(x_0)|},...,\tfrac{\phi_k(x)}{|\phi_k(x_0)|})$ is well defined it follows that $|\phi_i(x)|$ is constant non-zero for all $i$. We then find locally defined functions $\vartheta_i\defeq \mathrm{Im}(\ln\phi_i)$ so that
$$\pi(x)= (e^{i\vartheta_1(x)},...,e^{i\vartheta_k(x)})= (L\circ \widetilde\pi)(x).$$
Here we let $L:(\rn^k,A^{-1})\to (T^k,A^{-1})$, $(t_1,...,t_k)\mapsto (e^{it_1},...,e^{it_k})$ denote the standard isometric covering map. The map $\widetilde\pi(x)=(\vartheta_1(x),...,\vartheta_k(x))$ can be defined locally around any point.

Note that:
$$D\widetilde\pi (\nabla \vartheta_i) = \sum_j d\vartheta_j(\nabla \vartheta_i)\,\partial_{t_j} = \sum_j g(\nabla \vartheta_i, \nabla \vartheta_j)\,\partial_{t_j}.$$
Since $\pi$ is Riemannian submersion so is $\widetilde\pi$, and it follows that:
$$g(\nabla \vartheta_i, \nabla \vartheta_j)=A^{-1}(D\widetilde\pi\nabla \vartheta_i, D\widetilde\pi\nabla \vartheta_j)=\sum_{k,\ell} g(\nabla \vartheta_i,\nabla \vartheta_k)A^{-1}_{k\ell}g(\nabla \vartheta_\ell,\nabla \vartheta_j),$$
which implies
$$g(\nabla \phi_i,\phi_j)= -\phi_i\phi_j\,g(\nabla \vartheta_i, \nabla \vartheta_j) =- A_{ij}\phi_i\phi_j.$$
Additionally $\widetilde\pi$ is harmonic since $\pi$ is, and so
$$\tau(\phi_i)=-\phi_i\, g(\nabla \vartheta_i,\nabla \vartheta_i) +i\,\phi_i\tau(\vartheta_i) = -A_{ii}\phi.$$
It follows that $\mathcal F$ is a $(-A_{ii}, -A_{ij})$-eigenfamily, since $A$ is positive definite it is reduced by Proposition \ref{prop: A-definite}.
\end{proof}

\begin{remark}
Theorem \ref{torus} shows that any compact manifold $(M,g)$ admitting a reduced $\lambda$-diagonal $(-A_{ii}, -A_{ij})$-eigenfamily is a fibre bundle over the torus $T^k$. If, additionally, the horizontal distribution is integrable then one can proceed as in Theorem \ref{thm-circle-bundle} to show that $M$ is a quotient of $$M_0 \times [0,2\pi]^k $$ with metric $g(t_1, \dots t_{n-k})+A^{-1}$ such that $g(t_1, \dots t_{n-k})$ induces the a constant volume density on $M_0$.\end{remark}


\section{Existence and Non-existence from Curvature}\label{sec: 5}
In this section we apply our characterisations of $(\lambda,\lambda)$-eigenfunctions and $\lambda$-diagonal eigenfunctions by harmonic Riemannian submersions in order to derive existence and non-existence results.

We begin by remarking that harmonic maps $\pi:M\to S^1$ correspond, up to rotations of $S^1$, with harmonic $1$-forms on $M$ with integral periods (see e.g. \cite{Bai-Woo-book}, \cite{Helein}). The condition that the resulting map is submersive is  equivalent to the form being \textit{nowhere vanishing}. Theorems \ref{thm-ll-eigenfunction} and \ref{torus} then imply:

\begin{corollary}Let $(M,g)$ be a compact and connected Riemannian manifold.\begin{enumerate}[label=(\roman*).]
	\item Suppose $Ric \geq 0$ and first Betti number $\beta_1(M) = k$. Then there exists a reduced $\lambda$-diagonal eigenfamily $\mathcal{F} = \{ \phi_1, \dots, \phi_k\} $.
	\item Suppose the sectional curvature of $(M,g)$ is strictly negative, then $(M,g)$ does not admit any non-constant $(\lambda,\lambda)$-eigenfunctions for any value of $\lambda$.
	\end{enumerate}
\end{corollary}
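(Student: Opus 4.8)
The plan is to deduce both statements from the characterisations of $(\lambda,\lambda)$- and of $\lambda$-diagonal eigenfamilies by harmonic Riemannian submersions (Theorems \ref{thm-ll-eigenfunction}, \ref{thm-circle-bundle}, \ref{torus}), together with the dictionary recalled above between harmonic maps to $S^1$ (resp. $T^k$) and harmonic $1$-forms with integral periods, the submersion condition corresponding to such a form being nowhere zero. The two parts then become statements about the existence, respectively non-existence, of such a family of $1$-forms, and this is where the curvature hypotheses enter: through the Bochner technique for (i), and through comparison geometry for (ii).

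For (i): by the Bochner--Weitzenb\"ock formula every harmonic $1$-form $\omega$ satisfies $0=\int_M\big(|\nabla\omega|^2+Ric(\omega,\omega)\big)$, so $Ric\geq 0$ forces $\nabla\omega\equiv 0$; thus all harmonic $1$-forms on $M$ are parallel. By Hodge theory their space has dimension $\beta_1(M)=k$; I take $\omega_1,\dots,\omega_k$ to be the harmonic representatives of a basis of the free part of $H^1(M;\mathbb{Z})$, rescaled to have periods in $2\pi\mathbb{Z}$. These are parallel and linearly independent in cohomology; since a parallel $1$-form vanishing at a point vanishes identically, they are pointwise linearly independent, so $A_{ij}:=\langle\omega_i,\omega_j\rangle$ is a constant positive definite symmetric matrix. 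Writing $\omega_i=d\widehat\pi_i$ on the universal cover and letting $\pi_i:M\to S^1$ be the induced maps, the map $\pi=(\pi_1,\dots,\pi_k):M\to T^k$ has differential $v\mapsto(\omega_1(v),\dots,\omega_k(v))$, surjective everywhere; $\pi$ is harmonic (each component is, and the target is flat); and a short computation on the horizontal space $\span\{\omega_1^{\sharp},\dots,\omega_k^{\sharp}\}$ shows $\pi$ is a Riemannian submersion onto $(T^k,A^{-1})$. Theorem \ref{torus}, applied with $\phi_i:=\pi_i$ (viewed as a $\mathbb{C}$-valued map of modulus one) and an arbitrary base point, then yields the required reduced $(-A_{ii},-A_{ij})$-eigenfamily, which is $\lambda$-diagonal.

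For (ii): first $\lambda$ is real and negative — by Proposition \ref{Rie-Sif-lambdalambda} a non-constant $(\lambda,\lambda)$-eigenfunction $\phi$ has $|\phi|$ constant, and integrating $\overline\phi\,\tau(\phi)$ over $M$ gives $\lambda\int_M|\phi|^2=-\int_M g(\nabla\phi,\overline{\nabla\phi})<0$. By Theorem \ref{thm-ll-eigenfunction} such a $\phi$ produces a harmonic Riemannian submersion $\pi:M\to(S^1,\tfrac1{|\lambda|}dt^2)$, and by Theorem \ref{thm-circle-bundle} this exhibits $M$ as a mapping torus $M_0\times_\eta[0,2\pi]$ with metric $g(t)+\tfrac1{|\lambda|}dt^2$ of constant volume density in $t$; I must show no such structure is compatible with strictly negative sectional curvature. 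When $\dim M=2$ this is immediate: then $M_0=S^1$ and $M$ is a $2$-torus, which carries no metric of negative curvature by Gauss--Bonnet. In general, since $|\nabla\widehat\pi|^2=|\lambda|$ is constant the curves $t\mapsto[x,t]$ are constant-speed geodesics; in the $\mathbb{Z}$-cover $\widehat M=M_0\times\mathbb{R}$ they become complete geodesics $\gamma_x$ which are pairwise at uniformly bounded distance, since $M_0$ is compact and $g(t+2\pi)=\eta^{*}g(t)$ give $d_{\widehat M}(\gamma_x(s),\gamma_{x'}(s))\leq\sup_t\operatorname{diam}(M_0,g(t))<\infty$ for all $s$. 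Lifting the evident, uniformly bounded-width, homotopy between $\gamma_x$ and $\gamma_{x'}$ to the universal cover $\widetilde M$ — a Hadamard manifold of curvature bounded above by some $-\varepsilon<0$ by compactness of $M$ — shows these geodesics lift to distinct complete geodesics still at pairwise bounded distance; but in a Hadamard manifold the distance between two equal-speed geodesics is convex, hence a bounded one is constant, and a pair of geodesics at constant positive distance bounds a flat strip, excluded by strict negativity of the curvature. Thus all the $\gamma_x$ coincide, which is absurd as $\dim M_0\geq 1$, completing the proof.

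The step I expect to be the main obstacle is this lifting in (ii): one must arrange a homotopy between $\gamma_x$ and $\gamma_{x'}$ whose tracks stay uniformly short and depend continuously on the parameter, so that its lift to $\widetilde M$ witnesses bounded distance there — i.e.\ upgrade ``uniformly close in $\widehat M$'' to ``uniformly close in $\widetilde M$'' for a coherent choice of lifts. This is where the commutation of the deck group with the lifted geodesic flow of $\nabla\widehat\pi$ and the Flat Strip Theorem must be combined with care; once this is in place, the remaining geometry is standard Hadamard-manifold convexity, and for (i) the entire argument is routine.
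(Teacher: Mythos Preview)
Your argument for part (i) is essentially the paper's: both use Bochner to show that the harmonic $1$-forms are parallel, whence the induced map to the torus is a harmonic Riemannian submersion, and then invoke Theorem \ref{torus}. Your write-up is more detailed but the strategy is identical.

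For part (ii) you take a genuinely different route from the paper. The paper's proof is a single sentence: the locally defined $d\hat\pi$ is a closed (indeed harmonic) nowhere-vanishing $1$-form on $M$, and one invokes Tsagas' theorem \cite{Tsa} that on a compact manifold of strictly negative sectional curvature every such form must vanish somewhere. Your approach instead tries to derive a contradiction directly from the mapping-torus structure via the Flat Strip Theorem.

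There is, however, a real gap in your (ii), precisely where you flag it. You need two distinct horizontal geodesics in the universal cover $\widetilde M$ at \emph{bounded} distance. Your bound $d_{\widehat M}(\gamma_x(s),\gamma_{x'}(s))\le \sup_t\operatorname{diam}(M_0,g(t))$ is fine in the $\mathbb Z$-cover, but it does not lift: the homotopy $H(u,s)=(c(u),s)$ has tracks of length $\mathrm{length}_{g(s)}(c)$, and this quantity is \emph{not} periodic in $s$. The relation is $g(s+2\pi)=(\eta^{\pm1})^*g(s)$, so $\mathrm{length}_{g(s+2\pi)}(c)=\mathrm{length}_{g(s)}(\eta^{\pm1}\!\circ c)$; iterating, the relevant lengths are those of $\eta^{\pm n}\!\circ c$ in a fixed metric, which in general grow without bound (think of a pseudo-Anosov–type monodromy). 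Thus the lifted homotopy does not witness bounded distance in $\widetilde M$, and ``commutation of the deck group with the flow'' does not help: it only tells you $\sigma\cdot\widetilde\Phi_s=\widetilde\Phi_s\cdot\sigma$, which converts the problem into bounding the displacement function $p\mapsto d(p,\sigma p)$ along a horizontal geodesic, a convex function you again have no a priori bound for. So as written the argument does not close, whereas the paper's one-line appeal to Tsagas avoids the issue entirely.
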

\begin{proof}
	We prove (i) first. Since $M$ is compact and $\beta_1(M)=k$, there exist linearly independent integral harmonic $1$-forms $\omega_1,\dots \omega_k$. Let $\phi_1,\dots \phi_k$ denote the corresponding harmonic maps into $S^1$.
	
	Since $Ric \geq 0$ and $M$ is compact, we have that the $\omega_i$ are parallel by Bochner \cite{Bochner} and thus
	$$ g(\omega_i,\omega_j) = g(  d\phi_i,  d\phi_j) $$
	is constant and so the resulting map $(\phi_1 ,\dots, \phi_k)$ is a harmonic Riemannian submersion to a torus.
	
	Point (ii) follows from the fact, due to Tsagas \cite{Tsa}, that every closed $1$-form on a compact manifold with strictly negative curvature vanishes at at least one point.\end{proof}



\end{document}